\theoremstyle{plain}
\newtheorem{THEOREM}{Theorem}[section]
\newtheorem{lemma}[THEOREM]{Lemma}
\newtheorem{proposition}[THEOREM]{Proposition}
\theoremstyle{definition}
\theoremstyle{remark}
\newcommand{\Z}{\ensuremath{\mathbb{Z}}}   
\newcommand{\R}{\ensuremath{\mathbb{R}}}   
\newcommand{\T}{\ensuremath{\mathbb{T}}}   
\def \RR {\mathbb{R}}
\def \d {\delta}
\def \e {\epsilon}
\def \f {\varphi}
\def \l {\lambda}
\def \n {\nabla}
\def \ex {\vec{e}_1}
\def \ey {\vec{e}_2}
\def \< {\langle}
\def \> {\rangle}
\def \p {\partial}
\def \ra {\rightarrow}
\def \ss {\subset}
\newcommand{\der}[2]{(#1 \cdot \nabla) #2}
\DeclareMathOperator{\diver}{div} %
\newcommand{\tri}[3]{#1 \otimes #2 : \n #3}
\begin{document}

\title[Ill-posedness in Besov spaces]{Ill-posedness of basic equations of fluid dynamics in Besov spaces}

\author{A. Cheskidov}
\thanks{The work of A. Cheskidov is partially supported by NSF grant DMS--0807827}
\address[A. Cheskidov and R. Shvydkoy]
{Department of Mathematics, Stat. and Comp. Sci.\\
M/C 249,\\
       University of Illinois\\
       Chicago, IL 60607}
\email{acheskid@math.uic.edu}

\author{R. Shvydkoy}
\thanks{The work of R. Shvydkoy was partially supported by NSF grant
DMS--0604050}

\email{shvydkoy@math.uic.edu}

\begin{abstract}We give a construction of a divergence-free vector field $u_0 \in H^s \cap B^{-1}_{\infty,\infty}$, for all $s<1/2$,  such that any Leray-Hopf solution to the Navier-Stokes equation starting from $u_0$ is discontinuous at $t=0$ in the metric of $B^{-1}_{\infty,\infty}$. For the Euler equation a similar
result is proved in all Besov spaces $B^s_{r,\infty}$ where $s>0$ if $r>2$, and $s>n(2/r-1)$ if $1 \leq r \leq 2$.
\end{abstract}

\keywords{Euler equation, Navier-Stokes equation, ill-posedness, Besov spaces}
\subjclass[2000]{Primary: 76D03 ; Secondary: 35Q30}

\maketitle

\section{Introduction}

In recent years numerous results appear in the literature on well-posedness theory of the Euler and Navier-Stokes equations in Besov spaces (see for example, \cite{amann,cannone,chae,pp,vishik} and references therein). The best local existence and uniqueness result known for the Euler equation states that for any initial condition $u_0 \in B^{\frac{n}{r}+1}_{r,1}$ with $1<r \leq \infty$, where $n$ is the dimension of the fluid domain, there exists a unique weak solution $u$ in space $C([0,T];B^{\frac{n}{r}+1}_{r,1})$, for some $T>0$, such that $u(t) \ra u_0$ in $B^{\frac{n}{r}+1}_{r,1}$.  The case of $r=2$, $n=3$ is especially interesting for it constitutes the borderline space for applicability of the standard energy method in proving local well-posedness (see \cite{mb}). Notice that $B^{5/2}_{2,1}$ is a proper subspace of the Sobolev space $H^{5/2} = B^{5/2}_{2,2}$, where local existence is an outstanding open problem. As a part of a construction presented here in Proposition~\ref{p-euler} we show that the Euler equation is ill-posed in the opposite extreme space with respect to summation, namely in $B^{5/2}_{2,\infty}$. Specifically, there exists a $u_0 \in B^{5/2}_{2,\infty}$ such that any energy bounded weak solution to the Euler equation that starts from $u_0$ does not converge back to $u_0$ is the metric of $B^{5/2}_{2,\infty}$ as time goes to zero. Another particular case of Proposition~\ref{p-euler} demonstrates similar ill-posedness  result in $B^{1}_{\infty,\infty}$ thus precluding a possible extension of Pak and Park's result in $B^{1}_{\infty,1}$ (see \cite{pp}).

In the second part of this note we address the question of ill-posedness for the Navier-Stokes equations in the critical Besov space $X=B^{-1}_{\infty,\infty}$.
We recall that the homogeneous space $\dot{X}=\dot{B}^{-1}_{\infty,\infty}$ is invariant with respect to the natural scaling of the equation in $\R^n$. Moreover it is the largest such space \cite{cannone}. The non-homogeneous space considered in this note is even larger although (quasi-)invariant only with respect to the small
scale dialations. In a recent work of Bourgain and Pavlovic \cite{bp} the authors constructed a mild solution to NSE with initial condition $\|u_0\|_{\dot{X}} < \d$ such that at a time $t<\d$ the solution satisfies $\|u(t)\|_{\dot{X}} > 1/\d$. This shows the evolution under NSE is not continuous from $\dot{X}$ to $C([0,T]; \dot{X})$. In our Proposition~\ref{p-nse}, similar to the case of the Euler equation, we construct an initial condition $U$ which belongs to all Besov spaces $B^{3/r-1}_{r,\infty}$ in the range $1<r\leq \infty$, -- in particular $U$ has finite energy -- such that any Leray-Hopf weak solution starting from $U$ does not return to $U$ in the metric of inhomogeneous space $X$. This demonstrates an even more dramatic breakdown of NSE evolution in $X$  as there is no continuous trajectory in $X$ at all. More importantly our construction gives a simple model for the forward energy cascade, which is typically observed in turbulent flows \cite{frisch}. Incidentally, the result proved in \cite{cs} shows that any left-continuous Leray-Hopf solution in $X$ is necessarily regular.

We consider periodic boundary conditions for two main reason. Firstly, we do not make use of lower frequencies in our analysis, and secondly,
our constructions become much more transparent. However with the technique developed in \cite{ccfs} the results can be carried over to the open space too.

Let us now introduce the notation and spaces used in this paper.
We will fix the notation for scales $\lambda_q = 2^q$ in some inverse length
units. Let us fix a nonnegative radial function $\chi \in {C_0^{\infty}}(\R^n)$ such that $\chi(\xi)=1$ for $|\xi|\leq 1/2$, and $\chi(\xi) = 0$ for $|\xi|\geq 1$. We define
$\f(\xi) = \chi(\l_1^{-1}\xi) - \chi(\xi)$,
and $\f_q(\xi) = \f(\l_q^{-1}\xi)$ for $q \geq 0$, and $\f_{-1} = \chi$.
For a tempered distribution vector field $u$ on the torus $\T^n$ we consider the Littlewood-Paley projections
\begin{equation}
u_q (x) = \sum_{k \in \Z^n} \hat{u}(k) \f_q(k) e^{i k\cdot x}, \quad q \geq -1.
\end{equation}
So, we have $u = \sum_{q=-1}^\infty u_q$
in the sense of distributions. We also use the following notation
$u_{\leq q} = \sum_{p=-1}^q u_p$, and $\tilde{u}_q = u_{q-1} + u_q + u_{q+1}$.

Let us recall the definition of Besov spaces. A tempered distribution $u$ belongs to $B^s_{r,l}$ for $s\in \RR$, $1\leq l,r \leq \infty$ iff
$$
\|u\|_{B^s_{r,l}} = \left(\sum_{q\geq -1} (\l_q^s \|u_q\|_r)^l \right)^{1/l} < \infty.
$$

\section{Inviscid case}
The Euler equation for the evolution of ideal fluid is given by
\begin{equation}\label{ee}
u_t + \der{u}{u} = -\n p,
\end{equation}
where $u$ is a divergence free field on $\T^n$. By a weak solution to \eqref{ee} we understand an $L^2$-valued weakly continuous field $u$ satisfying \eqref{ee} in the distributional sense. Let us recall that all such solutions have absolutely continuous in time Fourier coefficients (see for example \cite{s}).

Our construction below is two-dimensional. So, we denote by $\ex,\ey$ the vectors of the standard unit basis and define
$$
u_0(x,y) = \ex \cos(y) + \ey \sum_{q=0}^\infty \frac{1}{\l_q^s} \cos(\l_q x).
$$
\begin{proposition}\label{p-euler} If $u$ is a weak solution to the Euler equation \eqref{ee} with initial condition $u(0) = u_0$. Then there is $\delta=\delta(n,r,s)>0$ independent of $u$ such that
we have
\begin{equation}\label{e:main}
\limsup_{t\ra 0^+} \| u(t) - u_0\|_{B^s_{r,\infty}} \geq \delta,
\end{equation}
where $s>0$ if $r>2$, and $s>n(2/r-1)$ if $1 \leq r \leq 2$.
\end{proposition}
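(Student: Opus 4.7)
The plan is to track the single Fourier mode $k_q = (\l_q, 1)$, which is absent from $u_0$ but is fed instantly by the quadratic nonlinearity through the interaction of the low-frequency shear $\cos y\,\ex$ with the high-frequency jet $\l_q^{-s}\cos(\l_q x)\,\ey$. A direct computation of the convolution in Fourier at $k_q$ shows that only two pairings survive in $\widehat{(u_0\cdot\n)u_0}$ (namely $((0,1),(\l_q,0))$ and $((\l_q,0),(0,1))$), and that after Leray projection the $\ey$-component dominates with magnitude at least $c\,\l_q^{1-s}$ for some absolute constant $c>0$. Multiplying by the Besov weight $\l_q^s$ with the heuristic choice $\l_q\sim 1/t$ gives a contribution of unit order to $\|u(t)-u_0\|_{B^s_{r,\infty}}$, which will produce the $\delta$ asserted in \eqref{e:main}.

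To extract this coefficient rigorously I would pair the equation with the divergence-free trigonometric vector field $\psi_q(x,y)=v_q\,e^{-i(\l_q x+y)}$, where $v_q$ is a unit vector orthogonal to $k_q$. Since $\diver\psi_q=0$, the pressure term drops out and an integration by parts gives, for any weak solution,
\[
\int_{\T^2} u(t)\cdot\psi_q\,dx\,dy = \int_0^t\!\int_{\T^2} u(s)\otimes u(s):\n\psi_q\,dx\,dy\,ds.
\]
At $s=0$ the inner integrand equals $\int u_0\otimes u_0 :\n\psi_q\,dx\,dy$, whose modulus is of order $\l_q^{1-s}$ by step one; the error from replacing $u(s)$ by $u_0$ is bounded using H\"older and $\|\n\psi_q\|_{L^\infty}\lesssim\l_q$:
\[
\Big|\int[u(s)\otimes u(s)-u_0\otimes u_0]:\n\psi_q\,dx\,dy\Big|\le C\l_q\,\|u(s)-u_0\|_{L^2}\bigl(\|u(s)\|_{L^2}+\|u_0\|_{L^2}\bigr).
\]

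I would then argue by contradiction. Assume $\|u(t)-u_0\|_{B^s_{r,\infty}}\le\eta$ on some interval $(0,t_0]$ with $\eta<\delta$. The restriction on $(s,r)$ in the statement is exactly what makes the Besov embedding $B^s_{r,\infty}\hookrightarrow L^2$ hold through Bernstein's inequality, so $\|u(t)-u_0\|_{L^2}\le K\eta$ on $(0,t_0]$. Integrating the error estimate together with the energy bound on $u$ yields
\[
\Big|\int_{\T^2} u(t)\cdot\psi_q\,dx\,dy\Big|\ge \tfrac12 c\,t\,\l_q^{1-s}\qquad\text{provided}\qquad \l_q^s\le \frac{c}{2CKE\eta}.
\]
Because $\psi_q$ detects a single Fourier mode up to a universal constant, this lower bound transfers to $\l_q^s\|(u(t)-u_0)_{q'}\|_{L^r}\gtrsim t\,\l_q$ for the corresponding Littlewood-Paley index $q'$.

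Choosing $\l_q$ at the upper end of the admissible range produces a Besov norm larger than $\eta$ as soon as $t$ exceeds a threshold of order $\eta^{1+1/s}$, contradicting the hypothesis on $(0,t_0]$ once $\delta=\delta(n,r,s)$ is taken small enough. The main obstacle is precisely this balance: one needs $\l_q\lesssim\eta^{-1/s}$ for the linearization to dominate while $\l_q\gtrsim 1/t$ for the weight $t\,\l_q$ to beat $\eta$, so the two ranges overlap only once $t$ is above the stated threshold. This is where the precise embedding constants, and hence the range of indices $(s,r)$ imposed in the hypothesis, come into play.
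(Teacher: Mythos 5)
Your overall strategy---tracking the Fourier mode $\xi_q=(\l_q,1)$, which is absent from $u_0$ but is pumped by the low--high interaction at rate $\sim\l_q^{1-s}$, and arguing by contradiction---is exactly the paper's. The computation of the source term and the use of the weak formulation (equivalently, the absolute continuity of Fourier coefficients) also match. The proof breaks down, however, at the error estimate. You bound the replacement error by $C\l_q\,\|u(s)-u_0\|_{L^2}\,(\|u(s)\|_{L^2}+\|u_0\|_{L^2})$, which carries a full factor $\l_q$, i.e.\ an extra $\l_q^{s}$ relative to the main term $c\,t\,\l_q^{1-s}$. Consequently the error is dominated only in the truncated range $\l_q\lesssim\eta^{-1/s}$, the best lower bound you can extract on the Besov norm is $\sim t\,\eta^{-1/s}$, and to beat $\eta$ you need $t\gtrsim\eta^{1+1/s}$---as you yourself note. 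But the statement concerns $\limsup_{t\ra 0^+}$: in the contradiction setup the hypothesis only gives $\|u(t)-u_0\|_{X}<\delta$ on some interval $(0,t_0]$ with $t_0$ depending on $u$ and possibly far smaller than $\delta^{1+1/s}$, so the two ranges need not overlap and no contradiction is reached. The argument does not close.

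The missing ingredient is the frequency-localized bilinear estimate $|(u\cdot\n v)_q|_1\lesssim\l_q^{1-s}\|u\|_{X}\|v\|_{X}$ of \eqref{e:basic}, proved via a paraproduct-type decomposition and Bernstein's inequality; this is where the hypotheses $s>0$ for $r>2$ and $s>n(2/r-1)$ for $r\le2$ actually enter (through the high--high term $\l_q^{1+n(2/r-1)-2s}$), not through an embedding $B^s_{r,\infty}\hookrightarrow L^2$ as you suggest---that embedding holds under the weaker condition $s>\tfrac{n}{2}(2/r-1)$. With \eqref{e:basic} the error term scales as $\l_q^{1-s}\|u(s)-u_0\|_{X}$, the same power of $\l_q$ as the main term, so smallness of $\|u(s)-u_0\|_{X}$ alone, uniformly in $q$, makes the error at most half the main term for \emph{every} $q$. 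Then $\l_q^{s}|\hat u(\xi_q,t)|\gtrsim t\l_q$ for all large $q$ simultaneously, hence $u(t)\notin B^s_{r,\infty}$, contradicting the harmless a priori reduction $u\in L^\infty([0,t_0];X)$. To repair your proof you would need to replace the H\"older/$L^2$ step by an estimate of this localized type.
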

The rest of the section is devoted to the proof of Proposition~\ref{p-euler}.

Let us denote $X = B^s_{r,\infty}$. We can make the assumption that for some $t_0>0$, $u \in L^\infty([0,t_0]; X)$. Indeed, otherwise \eqref{e:main} follows immediately. Further proof is based on the fact that $u_0$ produces a strong forward energy transfer which forces $u$ to actually escape from $B^s_{r,\infty}$ unless \eqref{e:main} is met. To this end, let us consider frequencies $\xi_q = (\l_q,1)$. Let $p(\xi)$ be the symbol of the Leray-Hopf projection. By a direct computation we have
\begin{equation}\label{transfer}
f_q = p(\xi_q) (u_0 \cdot \n u_0)^\wedge (\xi_q ) = i \l_q^{1-s} \ey + O(1/\l^s_q).
\end{equation}
We will prove the following estimate for the nonlinear term
\begin{equation}\label{e:basic}
|(u\cdot \n v)_q|_1 \lesssim \l_q^{1-s} \|u\|_{X} \|v\|_{X},
\end{equation}
for all $u,v \in X$ and $q \geq -1$. First, let us assume that $r\leq 2$. Using the identity $\diver(u \otimes v) = u \cdot \n v$
and the Bernstein inequality we obtain
\begin{align}
|\diver(u \otimes v)_q|_1 & \lesssim  \l_q |(u \otimes v)_q |_1
\leq \l_q \sum_{\substack{p',p'' \geq q \\  |p' - p''| \leq 2}} |u_{p'}|_r |v_{p''}|_{r'} \label{est1} \\
&+ \l_q |u_q|_r \sum_{p \leq q} |v_{p} |_{r'} + \l_q |v_q|_r \sum_{p \leq q} |u_p|_{r'} \label{est2}.
\end{align}
Using that
$$
|w_p|_{r'} \lesssim \l_p^{n(2/r-1)} |w_p|_{r},
$$
we have for the first sum
\begin{multline*}
\l_q \sum_{\substack{p',p'' \geq q \\ |p' - p''| \leq 2}} |u_{p'}|_r |v_{p''}|_{r'} \lesssim  \l_q \sum_{\substack{p',p'' \geq q \\ |p' - p''| \leq 2}} |u_{p'}|_r \l_{p'}^s |v_{p''}|_{r} \l_{p''}^s \l_{p''}^{n(2/r-1) -2s}  \\
\lesssim \l_q^{1 + n(2/r-1) -2s } \|u\|_X \|v\|_X.
\end{multline*}
For the second sum we obtain
$$
\l_q |u_q|_r \sum_{p \leq q} |v_{p} |_{r'} \lesssim  \l_q^{1-s}\l_q^s |u_q|_r \sum_{p \leq q} |v_{p} |_{r} \l_p^s \l_{p}^{n(2/r-1) - s} \lesssim \l_q^{1-s} \|u\|_X \|v\|_X.
$$
Similar estimate holds for the third term. We thus obtain \eqref{e:basic}.

In the case $r>2$, we use the basic embedding $L^{r} \ss L^{r'}$ instead of Bernstein's inequalities in \eqref{est1}--\eqref{est2}. The rest of the argument is similar.

We have
\begin{equation}\label{mild}
\hat{u}(\xi_q,t) = \hat{u}(\xi_q,0) + \int_0^t p(\xi_q)(u\cdot \n u)^\wedge(\xi_q,s) ds,
\end{equation}
for all $t>0$. By our construction, $\hat{u}(\xi_q,0) =0$. On the other hand we can estimate using \eqref{e:basic}
\begin{align*}
| p(\xi_q)(u\cdot \n u)^\wedge(\xi_q,s) - f_q | & \leq | (u\cdot \n u)^\wedge(\xi_q,s) -
(u_0 \cdot \n u_0)^\wedge(\xi_q) | \\
& = | (u\cdot \n u)_q^\wedge(\xi_q,s) - (u_0 \cdot \n u_0)_q^\wedge(\xi_q) | \\
& \leq | (u\cdot \n u)_q (s) - (u_0 \cdot \n u_0)_q |_1 \\
&\lesssim  \l_q^{1-s}(\|u(s)\|_{X} + \|u_0\|_{X}) \|u(s) - u_0\|_{X}.
\end{align*}
Thus, from \eqref{mild} we obtain
$$
\l_q^s |\hat{u}(\xi_q,t)| \geq t \l_q- tO(1) -  C \l_q \int_0^t(\|u(s)\|_{X} + \|u_0\|_{X}) \|u(s) - u_0\|_{X} ds.
$$
We can see that if the limit in \eqref{e:main} does not exceed $\delta = 1/(10C)$ then the integral becomes less than $t/2$. This implies that $u(t) \notin X$.

\section{Ill-posedness of NSE}

\def \lq {\l_{q_j}}
\def \lqm { \l_{q_j - 1} }
\def \lqk { \l_{q_k} }

Now we turn to the analogous question for the viscous model. The Navier-Stokes equation is given by
\begin{equation}\label{nse}
u_t + \der{u}{u} = \nu \Delta u - \n p.
\end{equation}
Here $u$ is a three dimensional divergence free field on $\T^3$. We refer to \cite{temam} for the classical well-posedness theory for this equation.
Let us recall that for every field $U\in L^2(\T^3)$ there exists a weak solution $u \in C_w([0,T);L^2) \cap L^2([0,T);H^1)$ to \eqref{nse} such that the energy inequality
\begin{equation}\label{enineq}
|u(t)|_2^2 + 2\nu \int_0^t |\n u(s)|_2^2 ds \leq |U|_2^2,
\end{equation}
holds for all $t>0$ and $u(t) \ra U$ strongly in $L^2$ as $t\ra 0$. In what follows we do not actually use inequality \eqref{enineq} which allows
us to formulate a more general statement below in Proposition~\ref{p-nse}.

Let us fix a small $\e >0$. Let us choose a sequence $q_1<q_2<...$ with elements sufficiently far apart so that
$\l_{q_i}^2 / \l_{q_{i+1}} < \e$. Let us fix a small $c>0$ and consider the following integer lattice blocks:

\begin{align*}
A_j & = [(1-c) \lq, (1+c) \lq] \times [-c \lq, c\lq]^2 \cap \Z^3 \\
B_j & = [-c \lqm, c \lqm]^2 \times [(1-c) \lqm, (1+c) \lqm ] \cap \Z^3\\
C_j& = A_j + B_j \\
A_j^*& = -A_j, \ B_j^* = - B_j, \ C_j^* = -C_j.
\end{align*}
Thus, $A_j$, $C_j$ and their conjugates lie in the $\lq$-th shell, while $B_j$, $B_j^*$ lie in the contiguous $\lqm$-th shell. Let us denote
$$
\ex(\xi) = p(\xi) \ex \text{ and } \ey(\xi) = p(\xi) \ey.
$$
We define
\begin{equation}
U = \sum_{j \geq 1} (U_{q_j} + U_{q_j - 1} ),
\end{equation}
where
$$
\widehat{U_{q_j}} =\frac{1}{\lq^2} \left( \ey(\xi) \chi_{A_j \cup A_j^*} + i (\ey(\xi) - \ex(\xi))\chi_{C_j} - i (\ey(\xi) - \ex(\xi))\chi_{C_j^*}    \right),
$$
and
$$
\widehat{U_{q_j - 1}} = \ex(\xi) \chi_{B_j \cup B_j^*}.
$$
Since $U$ has no modes in the $(q_j + 1)$-st shell, $\tilde{U}_{q_j} = U_{q_j - 1} + U_{q_j}$.

\begin{lemma}\label{ubesov}
We have $U \in B^{\frac{3}{r} - 1}_{r,\infty}$, for all $1<r \leq \infty$.
\end{lemma}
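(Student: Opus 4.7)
The plan is to exploit the disjoint-support structure of the blocks $U_{q_j}, U_{q_j-1}$ coming from the spacing condition $\l_{q_i}^2/\l_{q_{i+1}} < \e$, and then to estimate each non-zero Littlewood--Paley piece by a direct box-Dirichlet-kernel computation.

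First I would verify that each of $A_j, A_j^*, C_j, C_j^*$ lies in the $q_j$-th dyadic shell (since $C_j = A_j + B_j$ with $|B_j| \ll |A_j| \sim \lq$), while $B_j, B_j^*$ lie in the $(q_j - 1)$-th shell, and moreover that the spacing of the $q_j$'s makes all these shells pairwise distinct for different $j$. Consequently, the only non-trivial Littlewood--Paley projections of $U$ are at the indices $q = q_j$ and $q = q_j - 1$, and they coincide with $U_{q_j}$ and $U_{q_j - 1}$ respectively. This reduces the problem to the uniform bound
$$
\sup_{j}\max\bigl\{\lq^{3/r-1}\|U_{q_j}\|_r,\; \lqm^{3/r-1}\|U_{q_j-1}\|_r\bigr\} < \infty.
$$

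Next I would bound $\|U_{q_j}\|_r$ as follows. The Fourier data $\widehat{U_{q_j}}$ is a sum of four pieces, each of the form (bounded vector-valued factor coming from the projectors $\ey(\xi)$, $\ey(\xi)-\ex(\xi)$, which are essentially constant on each box) $\times$ indicator of a translate of a centered cube of side $\approx \lq$ in $\Z^3$. Inverting the Fourier transform, each piece becomes, up to a plane-wave modulation, a tensor product $D_{c\lq}(x_1)\,D_{c\lq}(x_2)\,D_{c\lq}(x_3)$ of three one-dimensional Dirichlet kernels. Using the classical bound $\|D_N\|_{L^r(\T)} \lesssim N^{1-1/r}$ valid for $1 < r \leq \infty$, together with the $1/\lq^2$ prefactor, we get
$$
\|U_{q_j}\|_r \lesssim \frac{1}{\lq^2}\,\lq^{3(1-1/r)} = \lq^{1-3/r},
$$
so $\lq^{3/r-1}\|U_{q_j}\|_r \lesssim 1$. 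An analogous box-Dirichlet calculation applied to $U_{q_j-1}$, whose Fourier support is a translate of a cube of side $\approx \lqm$ (again with the projector factor $\ex(\xi) \approx \ex$ being essentially constant on $B_j, B_j^*$), yields the companion estimate $\lqm^{3/r-1}\|U_{q_j-1}\|_r \lesssim 1$. Taking the supremum over $j$ finishes the proof.

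The main technical point will be the sharpness of the one-dimensional Dirichlet-kernel estimate $\|D_N\|_r \lesssim N^{1-1/r}$ in the range $1 < r < 2$: crude bounds obtained by routing through $L^2$ via Bernstein, or by Hausdorff--Young (which only gives the right exponent for $r \geq 2$), are off by a power of $\lq$. One has to use the pointwise estimate $|D_N(x)| \lesssim \min(N, 1/|x|)$ directly, separating the integration over $\{|x| \lesssim 1/N\}$ from its complement, together with the tensor structure of the box sum, to recover the correct exponent $1-3/r$ uniformly across the stated range of $r$.
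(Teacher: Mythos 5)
Your proposal is correct and takes essentially the same route as the paper: reduce to a single Fourier block, invert the box indicator into a triple tensor product of one-dimensional Dirichlet kernels, and apply $\|D_N\|_{L^r(\T)} \lesssim N^{1-1/r}$ together with the $\l_{q_j}^{-2}$ prefactor. The only cosmetic difference is in handling the projector symbol: the paper removes it via $L^r$-boundedness of the Leray--Hopf projection for $1<r<\infty$ (treating $r=\infty$ separately by the triangle inequality, which gives $|U_{q_j}|_\infty \lesssim \l_{q_j}$ directly), whereas you pull it out as an ``essentially constant'' factor on each box --- acceptable, but it deserves a one-line justification, e.g.\ that a smooth symbol adapted to a box of side $\sim \l_{q_j}$ has inverse Fourier transform of uniformly bounded $L^1$ norm, hence acts boundedly on every $L^r$.
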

\begin{proof}
We give the estimate only for one block. Using boundedness of the Leray-Hopf projection, we have for $1<r<\infty$
$$
| \lq^{-2} (\ey(\cdot) \chi_{A_j})^{\vee} |_r \lesssim \lq^{-2} | (\chi_{A_j})^\vee |_r \leq \lq^{-2} | D_{c\lq} |_r^3,
$$
where $D_N$ denote the Dini kernel. By a well-known estimate, we have $|D_N|_r \leq N^{1-\frac{1}{r}}$, which implies
the lemma.

If $r = \infty$, we simply use the triangle inequality to obtain
$$
|U_{q_j}|_\infty \lesssim \lq.
$$
\end{proof}
Let us now examine the trilinear term. We will use the following notation for convenience
\begin{equation}
u\otimes v : \n w = \int_{\T^3} v_i \p_i w_j u_j dx.
\end{equation}

Using the antisymmetry we obtain
\begin{align*}
\tri{U}{U}{U_{q_j}}& = \sum_{k \geq j+1} \tri{\tilde{U}_{q_k}}{\tilde{U}_{q_k}}{U_{q_j}} +
\tri{\tilde{U}_{q_j}}{\tilde{U}_{q_j}}{U_{q_j}} \\
&+ \tri{U_{\leq q_{j-1}}}{\tilde{U}_{q_j}}{U_{q_j}} + \tri{\tilde{U}_{q_j}}{U_{\leq q_{j-1}}}{U_{q_j}} \\
&= \sum_{k \geq j+1} \tri{\tilde{U}_{q_k}}{\tilde{U}_{q_k}}{U_{q_j}} + \tri{U_{q_j - 1}}{U_{q_j}}{U_{q_j}} -
\tri{U_{q_j}}{U_{q_j}}{U_{\leq q_{j-1}}} \\
& = A+B+C.
\end{align*}
Using Bernstein's inequalities we estimate
\begin{align*}
|A| & \lesssim  \lq |U_{q_j}|_\infty \sum_{k \geq j+1} | \tilde{U}_{q_k} |_2^2 \lesssim \frac{\lq^2}{\l_{q_{j+1}}} \leq \e, \\
|C| & \lesssim |U_{q_j}|_2^2 \sum_{k \leq j-1} \lqk | \tilde{U}_{q_k} |_\infty \lesssim \frac{\l_{q_{j-1}}^2}{\lq} \leq \e.
\end{align*}
On the other hand, a straightforward computation show that
\begin{equation}
B \sim \lq.
\end{equation}
\begin{proposition}\label{p-nse}
Let $u\in C_w([0,T); L^2) \cap L^2([0,T); H^1)$ be a weak solution solution to the NSE with initial condition $u(0) = U$. Then there is $\delta=\delta(u) >0$ such that
\begin{equation}
\limsup_{t \ra 0+} \|u(t) - U\|_{B^{- 1}_{\infty,\infty}} \geq \delta.
\end{equation}
If in addition $u$ is a Leray-Hopf solution satisfying the energy inequality \eqref{enineq}, then $c$ can be chosen independent of $u$.
\end{proposition}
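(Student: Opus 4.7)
\medskip\noindent\textbf{Proof proposal.}
The plan is to imitate the Euler argument from Proposition~\ref{p-euler}: test the weak form of NSE against the building block $U_{q_j}$ and exploit the lower bound $B\sim \lq$ that has just been isolated in the trilinear decomposition. One argues by contradiction and supposes there exists $t_0>0$ for which $\|u(t)-U\|_X\le \d$ throughout $[0,t_0]$, where $X=B^{-1}_{\infty,\infty}$ and $\d$ will be pinned down small at the end. For each large $j$, the natural test functional is
\begin{equation*}
\phi_j(t) = \langle u(t)-U,\, U_{q_j}\rangle,
\end{equation*}
which vanishes at $t=0$. Plugging the time-independent test function $U_{q_j}$ into the weak formulation of \eqref{nse} yields
\begin{equation*}
\phi_j(t) = \int_0^t \bigl[\, \tri{u(s)}{u(s)}{U_{q_j}} - \nu\, \langle \n u(s), \n U_{q_j}\rangle\, \bigr]\, ds.
\end{equation*}

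The heart of the argument is to match a lower bound for $\phi_j(t)$ against an upper bound. For the lower bound, I would split the trilinear integrand as $\tri{U}{U}{U_{q_j}}$ plus the three perturbative pieces $\tri{v}{U}{U_{q_j}}$, $\tri{U}{v}{U_{q_j}}$, and $\tri{v}{v}{U_{q_j}}$ with $v=u-U$. The first equals $A+B+C = \lq + O(\e)$ by the computation in the excerpt. Each perturbative piece should then be controlled by $C\d \lq$ via a bilinear estimate exploiting the sharp frequency localization of $U_{q_j}$ in the $q_j$-th dyadic shell together with the $X$-smallness of $v$. The viscous contribution is handled using $|\n U_{q_j}|_2\lesssim \lq^{1/2}$ and the $L^2_tH^1_x$ a priori regularity of $u$, producing an error of order $\sqrt{\nu}\, \lq^{1/2}\, t^{1/2}\,\|u\|_{L^2_t H^1_x}$, which is $o(\lq t)$ as $j\to\infty$. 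Summing,
\begin{equation*}
\phi_j(t) \gtrsim \lq t\,(1-C\d-C\e) - C\sqrt{\nu\, t\, \lq}.
\end{equation*}

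For the upper bound, the Fourier support of $U_{q_j}$ lies in the $q_j$-th shell, so by duality combined with $|U_{q_j}|_1\lesssim |U_{q_j}|_2\lesssim \lq^{-1/2}$ one obtains
\begin{equation*}
|\phi_j(t)| \lesssim \lq\, \|u(t)-U\|_X\, |U_{q_j}|_1 \lesssim \d\, \lq^{1/2}.
\end{equation*}
After fixing $\d$ and $\e$ sufficiently small, I would pick $j$ large enough that $\lq^{1/2}\, t$ dominates the right-hand side, producing the desired contradiction. The Leray--Hopf refinement then follows because the energy inequality bounds $\|u\|_{L^\infty_t L^2_x}$ uniformly by $|U|_2$, which makes all implicit constants $u$-independent and allows the parameter $c$ in the block construction to be fixed universally.

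The main obstacle I anticipate is the bilinear estimate $|\tri{v}{U}{U_{q_j}}|\lesssim \lq\, \|v\|_X$, and its symmetric and quadratic counterparts, because $v$ is only controlled in an $L^\infty$-based Besov space of negative regularity; a naive H\"older estimate would lose a factor of $\lq^{1/2}$ or so. The sparseness condition $\lambda_{q_i}^2/\lambda_{q_{i+1}}<\e$ built into the construction of $U$ must be used to confine the nontrivial paraproducts to a narrow band of shells around $q_j$, and within that band the Bernstein inequality $\|v_q\|_\infty \le \l_q \|v\|_X$ paired with the sharp $L^1$ size of the localized blocks of $U$ should close the estimate.
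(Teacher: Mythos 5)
Your contradiction mechanism differs from the paper's: you test the weak formulation against the \emph{fixed} block $U_{q_j}$ and derive a contradiction by duality (the linear functional $\langle u(t)-U, U_{q_j}\rangle$ would have to be both $\gtrsim \lq t$ and $\lesssim \d\lq^{1/2}$), whereas the paper tests against $u_{q_j}$ itself, tracks the shell energy $|\tilde{u}_{q_j}(t)|_2^2$, and derives the contradiction from $u(t)$ acquiring infinite energy. Your endgame is arguably cleaner (it avoids the paper's third error term $\tri{u}{u}{w_{q_j}}$ entirely), and the skeleton --- contradiction hypothesis, main term $t(B+O(\e))$ with $B\sim\lq$, viscous term $O(\nu^{1/2}\lq^{1/2}t^{1/2})$, take $j\to\infty$ at fixed $t_0$ --- is sound.

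The gap is that the entire technical content of the proof lives in the trilinear error estimates, and you have asserted rather than proved them; worse, two of your specific claims about them are not attainable as stated. First, the blanket bound ``each perturbative piece is controlled by $C\d\lq$'' cannot hold pointwise in time for the pieces containing $u$ or two copies of $v$: with only $\|v\|_{B^{-1}_{\infty,\infty}}\le\d$ and $u\in L^\infty L^2$, terms such as $\tri{u}{v}{U_{q_j}}$ and $\tri{v}{v}{U_{q_j}}$ can only be closed with a factor of $|\n u(s)|_2$ (the paper gets $\d\lq^{1/2}|\n u|_2$ and $\d|\n u|_2^2+\d\lq|\n\tilde{u}_{q_j}|_2$), and one must integrate in time, use $E(t_0)<\infty$, and invoke $\int_0^{t_0}|\n\tilde{u}_{q_j}|_2\,ds\to 0$ as $j\to\infty$ before the sum is beaten by $c_1\lq t_0$. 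Second, the route you propose for closing the bilinear estimates --- Bernstein $|v_q|_\infty\le\l_q\|v\|_X$ paired with the ``sharp $L^1$ size'' of the blocks --- does not work: $|U_{q_j}|_1\sim\lq^{-2}(\log\lq)^3$ because the blocks are inverse Fourier transforms of characteristic functions of cubes (products of Dirichlet kernels), and the resulting logarithmic losses accumulate to beat $\d\lq$; moreover the low-high interaction $\tri{v_{\le q_j}}{\tilde{U}_{q_j}}{U_{q_j}}$ cannot be closed by any direct H\"older pairing --- the paper first uses the divergence-free antisymmetry to rewrite it as $-\tri{U_{q_j}}{\tilde{U}_{q_j}}{v_{\le q_j}}$, moving the derivative onto $v$ and exploiting $|\tilde{U}_{q_j}|_2^2\lesssim\lq^{-1}$ together with $\sum_{p\le q_j}\l_p|v_p|_\infty\lesssim\d\lq^2$. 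Without carrying out these rearrangements and the full Bony-type case analysis, the proposal does not constitute a proof.
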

\begin{proof}
Using $u_{q_j}$ as a test function we can write
$$
\p_t( \tilde{u}_{q_j} \cdot u_{q_j}) = - \nu \n \tilde{u}_{q_j} \cdot \n u_{q_j} + \tri{u}{u}{u_{q_j}}.
$$
Denoting $E(t) = \int_0^t |\n u|_2^2 ds$ we obtain
\begin{multline}\label{crucial}
|\tilde{u}_{q_j}(t)|_2^2 \geq |U_{q_j}|_2^2 - \nu E(t) + c_1 \lq t \\
- c_2\int_0^t \left|  \tri{u}{u}{u_{q_j}} - \tri{U}{U}{U_{q_j}}  \right| ds,
\end{multline}
for some positive constants $c_1$ and $c_2$.
We now show that if the conclusion of the proposition fails then for some small $t>0$ the integral term is less than $c_1\lq t/ 2$ uniformly for all large $j$.
This forces $|\tilde{u}_{q_j}(t)|_2^2  \gtrsim \lq t$ for all large $j$. Hence $u$ has infinite energy, which is a contradiction.

So suppose that for every $\delta >0$ there exists $t_0 = t_0(\delta) >0$ such that $\|u(t)- U\|_{B^{- 1}_{\infty,\infty}} < \d$ for all $0<t\leq t_0$.
Denoting $w = u-U$ we write
\begin{multline*}
\tri{u}{u}{u_{q_j}} - \tri{U}{U}{U_{q_j}} = \tri{w}{U}{U_{q_j}} + \tri{u}{w}{U_{q_j}} \\ + \tri{u}{u}{w_{q_j}} = A + B+ C.
\end{multline*}
We will now decompose each triplet into three terms according to the type of interaction (c.f. Bony \cite{bony}) and estimate each of them separately.
\begin{multline*}
A = \sum_{\substack{p',p'' \geq q_j \\ |p' - p''| \leq 2}} \tri{w_{p'}}{U_{p''}}{U_{q_j}} + \tri{w_{\leq q_j}}{\tilde{U}_{q_{j}}}{ U_{q_j}} \\
+ \tri{\tilde{w}_{q_j}}{U_{\leq q_j}}{U_{q_j}} - \text{repeated} = A_1 + A_2 + A_3.
\end{multline*}
Using Lemma~\ref{ubesov} along with H\"{o}lder and Bernstein inequalities  we obtain
\begin{align*}
| A_1 | & \leq |\n U_{q_j}|_4 \sum |w_{p'}|_\infty |U_{p''}|_{4/3} \lesssim \lq^{5/4} \sum |w_{p'}|_\infty \l_{p''}^{-5/4} \lesssim  \d \lq, \\
|A_2| & = |\tri{U_{q_j}}{\tilde{U}_{q_{j}}}{w_{\leq q_j} }| \leq |\tilde{U}_{q_j}|_2^2 |\n w_{\leq q_j}|_\infty \lesssim
\lq^{-1} \sum_{p\leq q_j} \l_p^{2} \l_p^{-1} |w_p|_\infty < \d \lq, \\
|A_3| & \leq \lq |U_{\leq q_j}|_2 |U_{q_j}|_{2} |\tilde{w}_{q_j}|_\infty \lesssim  |\tilde{w}_{q_j}|_\infty < \d \lq.
\end{align*}
We have shown the following estimate:
\begin{equation}\label{A}
|A| \lesssim \d \lq.
\end{equation}
As to $B$ we decompose analogously,
\begin{multline*}
B = \sum_{\substack{p',p'' \geq q_j \\ |p' - p''| \leq 2}} \tri{u_{p'}}{w_{p''}}{U_{q_j}} + \tri{u_{\leq q_j}}{\tilde{w}_{q_{j}}}{ U_{q_j}} \\
+ \tri{\tilde{u}_{q_j}}{w_{\leq q_j}}{U_{q_j}} - \text{repeated} = B_1+B_2+B_3.
\end{multline*}
Again, using Lemma~\ref{ubesov}, Bernstein and H\"{o}lder inequalities we obtain
\begin{align*}
|B_1| & \lesssim \lq |U_{q_j}|_{2} \sum |u_{p'}|_2 |w_{p''}|_\infty \leq \d \lq^{1/2} |\n u|_2.\\
|B_2| & =  \left| \tri{U_{q_j}}{\tilde{w}_{q_{j}}}{u_{\leq q_j}} \right| \leq  |U_{q_j}|_{2} |\tilde{w}_{q_{j}}|_\infty |\n u_{\leq q_j}|_2 \\
     & \leq \lq^{-1/2} |\tilde{w}_{q_{j}}|_\infty |\n u|_2 \leq \d \lq^{1/2} |\n u|_2.\\
|B_3| & \leq |\tilde{u}_{q_j}|_2 |w_{\leq q_j}|_\infty |\n U_{q_j}|_2 \lesssim \lq^{1/2} |\tilde{u}_{q_j}|_2  \sum_{p \leq q_j}  \l_p^{- 1} |w_p|_\infty \l_p \\
&\lesssim \d \lq^{1/2} |\n u|_2.
\end{align*}
We thus obtain
\begin{equation}\label{B}
|B| \lesssim \d \lq^{1/2}|\n u|_2.
\end{equation}
Continuing in a similar fashion we write
\begin{multline*}
C = \sum_{\substack{p',p'' \geq q_j \\ |p' - p''| \leq 2}} \tri{u_{p'}}{u_{p''}}{w_{q_j}} + \tri{u_{\leq q_j}}{\tilde{u}_{q_{j}}}{w_{q_j}} \\
+ \tri{\tilde{u}_{q_j}}{u_{\leq q_j}}{w_{q_j}} - \text{repeated} = C_1+C_2+C_3.
\end{multline*}
\begin{align*}
|C_1| & \leq |\n w_{q_j}|_\infty \sum_{p \geq q_j - 2} |\tilde{u}_p|^2_2 \lesssim \lq^{1} |w_{q_j}|_\infty \lq^{-2} |\n u|_2^2 \leq \d |\n u|_2^2,\\
|C_2| & \leq | \n u|_{2} |\tilde{u}_{q_{j}}|_2 |w_{q_j}|_\infty \lesssim \lq^{-1} |\n u|_2^2 |w_{q_j}|_r \leq \d |\n u|_2^2.\\
\intertext{Now using a uniform bound on the energy $|u(t)|^2_2 \lesssim 1$
for almost all $t\geq0$, we estimate }
|C_3| &\lesssim \lq |w_{q_j}|_\infty |\tilde{u}_{q_j}|_2 \leq \d \lq |\n \tilde{u}_{q_j}|_2.
\end{align*}
Thus,
\begin{equation}\label{C}
|C| \lesssim \d |\n u|_2^2 + \d \lq |\n \tilde{u}_{q_j}|_2.
\end{equation}
Now combining estimates \eqref{A}, \eqref{B}, \eqref{C} along with the boundedness of $E(t_0)$ we obtain
\begin{multline} \label{nonlinearterm}
\int_0^{t_0} \left|  \tri{u}{u}{u_{q_j}} - \tri{U}{U}{U_{q_j}}  \right| ds \lesssim \d \lq t_0 + \d \lq^{1/2} t^{1/2}_0 \\ + \d +  \d \lq \int_0^{t_0} |\n \tilde{u}_{q_j}(s)|_2 ds.
\end{multline}
Using that
$$\int_0^{t_0} |\n \tilde{u}_{q_j}(s)|_2 ds \ra 0$$ as $j \ra \infty$
we can chose $\delta$ small enough and $j_0$ large enough so that the left hand side of the \eqref{nonlinearterm} is less than
$$\frac{c_1}{2c_2} \lq t_0$$
for all $j \geq j_0$.
Going back to \eqref{crucial} this implies
$$
|\tilde{u}_{q_j}(t_0)|_2^2 \geq |U_{q_j}|_2^2 - \nu E(t_0) + c_1 \lq t_0/2,
$$
for all $j >j_0$, which shows that $u(t_0)$ has infinite energy, a contradiction.

The last statement of the proposition follows from the fact that we have the bounds on $|u(t)|_2 \leq |U|_2$ and $E(t_0) \leq (2\nu)^{-1} |U|_2^2$ which remove dependence of the constants on $u$.
\end{proof}


\end{document}